\theoremstyle{theorem}
\newtheorem{theorem}{Theorem}[section]
\newtheorem{lemma}[theorem]{Lemma}
\newtheorem{corollary}[theorem]{Corollary}
\theoremstyle{definition}
\newtheorem{remark}[theorem]{Remark}
\newcommand{\vect}[1]{
 {\boldsymbol #1}
}
\begin{document}

\title[On region freeze crossing change]{A subspecies of region crossing change, \\ region freeze crossing change}

\author{Ayumu Inoue}
\address{Department of Mathematics Education, Aichi University of Education, Kariya, Aichi 448-8542, Japan}
\email{ainoue@auecc.aichi-edu.ac.jp}

\author{Ryo Shimizu}
\address{Okazakigakuen High School, Okazaki, Aichi 444-0071, Japan}
\email{s2120304@auecc.aichi-edu.ac.jp}

\subjclass[2010]{57M25}
\keywords{knot, link, local move, region crossing change, unknotting operation}

\begin{abstract}
We introduce a local move on a link diagram named a region freeze crossing change which is close to a region crossing change, but not the same.
We study similarity and difference between region crossing change and region freeze crossing change.
\end{abstract}

\maketitle

\section{Introduction}
\label{sec:introduction}

A \emph{region crossing change} at a region $R$ of a link diagram $D$ is a local move on $D$ which changes the crossings of $D$ touching $R$ (see Figure \ref{fig:RCC}).
Ayaka Shimizu \cite{Shimizu2014} showed that, on any knot diagram, a crossing change at a crossing is always realized by a sequence of region crossing changes.
Here, a sequence of local moves is said to \emph{realize a crossing change} at a crossing if the sequence and the crossing change bring the same effect.
Region crossing change is an unknotting operation for knots, because any link is untied by a sequence of crossing changes.
\begin{figure}[htbp]
 \begin{center}
  \includegraphics[scale=0.18]{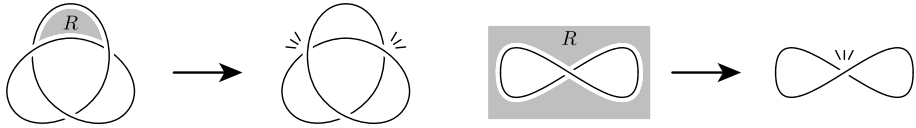}
 \end{center}
 \caption{Region crossing changes at $R$}
 \label{fig:RCC}
\end{figure}

Varieties of region crossing change have been proposed and studied by several authors \cite{AS2012, HSS2015}.
In this paper, we introduce another subspecies of region crossing change named region freeze crossing change.
A \emph{region freeze crossing change} at a region $R$ of a link diagram $D$ is a local move on $D$ which changes all crossings of $D$ other than the crossings touching $R$ (see Figure \ref{fig:RFCC}).
We will see that, just as region crossing change, region freeze crossing change is an unknotting operation for knots (Corollary \ref{cor:rfcc_is_unknotting_operation}).
On the other hand, in contrast with region crossing change, there is a knot diagram such that a crossing change at its certain crossing is not realized by any sequence of region freeze crossing changes.
We will give a necessary and sufficient condition for a knot diagram so that a crossing change at its crossing is always realized by a sequence of region freeze crossing changes (Theorems \ref{thm:main1} and \ref{thm:main2}).
\begin{figure}[htbp]
 \begin{center}
  \includegraphics[scale=0.18]{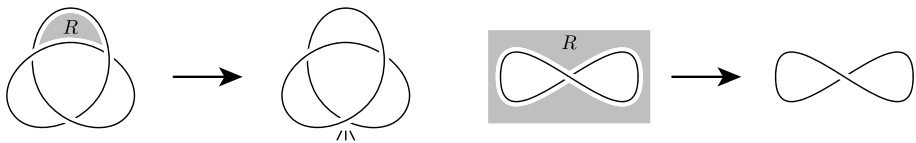}
 \end{center}
 \caption{Region freeze crossing changes at $R$}
 \label{fig:RFCC}
\end{figure}

Throughout this paper, a knot means a link with one component as usual.
For any subset $X, X^{\prime}$ of a set, $X \oplus X^{\prime}$ denotes the subset $(X \cup X^{\prime}) \setminus (X \cap X^{\prime})$.

\section{Facts about region crossing change}
\label{sec:facts_about_region_crossing_change}

In this section, we review several facts about region crossing change.
Obviously, region crossing changes first at a region and then at the region again do not change a link diagram in consequence.
Further effects of region crossing changes first at a region $R$ and then at a region $R^{\prime}$ and first at $R^{\prime}$ and then at $R$ are the same.
Thus we may reword region crossing changes at a region $R$, then at a region $R^{\prime}$, $\dots$, then at a region $R^{\prime \prime}$ as region crossing changes about $\{ R, R^{\prime}, \dots, R^{\prime \prime} \}$ shortly, if the regions $R, R^{\prime}, \dots, R^{\prime \prime}$ are mutually distinct.
Region crossing changes about $\emptyset$ shall mean that we do not change a link diagram at all.

Ayaka Shimizu showed the following key theorem:

\begin{theorem}[\cite{Shimizu2014}]
\label{thm:rcc_realizes_crossing_change}
On a knot diagram, a crossing change at a crossing is always realized by a sequence of region crossing changes.
\end{theorem}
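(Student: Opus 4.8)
The plan is to recast the statement as a surjectivity assertion over the field $\mathbb{F}_2=\mathbb{Z}/2\mathbb{Z}$ and then to pin down the precise obstruction to surjectivity. Let $D$ be a knot diagram with $n$ crossings, regarded as a connected $4$-valent graph on $S^2$, so that it has $n+2$ regions. Record the incidence data in a matrix $M$ over $\mathbb{F}_2$ whose rows are indexed by crossings and whose columns are indexed by regions, with $M_{c,R}$ equal to the number of corners of $c$ lying in $R$, taken modulo $2$. Since performing region crossing changes about a set $S$ of regions flips exactly those crossings met an odd number of times, its net effect is the vector $M\,\mathbf{1}_S\in\mathbb{F}_2^{n}$ (coordinates indexed by crossings). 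Thus the theorem is equivalent to surjectivity of $S\mapsto M\,\mathbf{1}_S$, i.e.\ to $M$ having full row rank $n$, which is in turn equivalent to the left kernel of $M$ being trivial: there is no nonempty set $T$ of crossings for which every region contains an even number of corners of crossings in $T$.

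Next I would checkerboard-color the regions black and white. Because the two corners of a crossing of one color sit in opposite quadrants, the black condition and the white condition on such a set $T$ decouple completely. Encoding each color as a graph in the usual way — the black Tait graph $G_B$ has the black regions as vertices and one edge per crossing joining its two black corners, and similarly $G_W$ for white — the ``even number of corners'' condition becomes exactly the statement that $T$ is an even subgraph (an element of the cycle space) of both $G_B$ and $G_W$. Hence the left kernel of $M$ is identified with $Z(G_B)\cap Z(G_W)$, where $Z(\cdot)$ is the $\mathbb{F}_2$ cycle space. Since $D$ is connected, $G_B$ and $G_W$ are connected plane graphs dual to one another, and planar duality gives $Z(G_W)=Z(G_B)^{\perp}$ for the standard inner product on $\mathbb{F}_2^{n}$. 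Therefore the left kernel is the \emph{bicycle space} $Z(G_B)\cap Z(G_B)^{\perp}$ of $G_B$, and the count $\dim Z(G_B)+\dim Z(G_W)=(n-b+1)+(n-w+1)=n$ (with $b+w=n+2$) shows that the theorem holds if and only if this bicycle space vanishes.

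The main obstacle is exactly this last vanishing, and it is where the hypothesis that $D$ is a knot rather than a many-component link must enter: the standard diagram of the Hopf link already has a one-dimensional bicycle space, spanned by the set of both its crossings. My plan here is to show that $\dim_{\mathbb{F}_2}\bigl(Z(G_B)\cap Z(G_B)^{\perp}\bigr)$ equals $\mu-1$, where $\mu$ is the number of components of the link presented by $D$, so that $\mu=1$ forces the bicycle space to be $0$. I would prove this by translating a nonzero bicycle $T$ back into the diagram: the crossings of $T$ can be smoothed so as to exhibit $D$ as built from at least two disjoint closed strands, contradicting one-componentness, while conversely distinct components yield independent bicycles. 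I expect the careful bookkeeping of this component count — equivalently, the nondegeneracy of the intersection form on $Z(G_B)$ for a single closed curve — to be the crux of the argument, with the reductions above being essentially formal.
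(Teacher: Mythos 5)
Your reduction contains a step that fails precisely on the diagrams that occupy most of the paper's proof. You model the move by the corner-count matrix $M$, with $M_{c,R}$ the number of corners of $c$ in $R$ taken modulo $2$, and assert that region crossing changes about $S$ have net effect $M\,\mathbf{1}_S$. But a region crossing change at $R$ flips every crossing \emph{touching} $R$ exactly once, and at a reducible (nugatory) crossing one region contains two opposite corners of the crossing: there $M_{c,R}=0$ while the move flips $c$. (Simplest instance: the one-crossing unknot diagram, where the outer region $R$ contains two corners of the unique crossing $c$; the region crossing change at $R$ changes $c$, yet $M\,\mathbf{1}_{\{R\}}=\mathbf{0}$.) The correct incidence matrix $A$, with entries recording touching as in the paper, agrees with $M$ only on reduced diagrams. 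So your equivalence ``theorem $\Leftrightarrow M$ has full row rank'' is valid only in the reduced case, and full rank of $M$ --- which is what the Tait-graph machinery computes, since a loop edge has zero incidence column --- says nothing about surjectivity of $A$ on reducible diagrams. This is not a notational slip the graph theory repairs: it is exactly the difficulty the paper handles separately, first treating reduced diagrams by the checkerboard trick and then running an induction over reducible crossings, choosing the checkerboard coloring on each spliced-off factor so that the reducible crossing ends up touched by a controlled (even) number of chosen regions. Your argument needs some analogous supplement to pass from reduced to arbitrary diagrams.

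The second issue is that your crux --- $\dim\bigl(Z(G_B)\cap Z(G_B)^{\perp}\bigr)=\mu-1$ --- is announced rather than proved, as you yourself flag. It is a true classical fact (Shank's theory of left-right paths; see also Godsil and Royle, \emph{Algebraic Graph Theory}, Ch.~17), and for the present theorem you only need the one direction that a nonzero bicycle forces $\mu\geq 2$; but the smoothing sketch as written is not yet an argument, since you must say how a bicycle coherently selects smoothings and why the result disconnects the strand. Modulo these two repairs your route is genuinely different from the paper's constructive induction and is closer in spirit to the incidence-matrix work of Cheng and Gao cited there; its payoff, once fixed (restrict to reduced diagrams plus an induction, and cite or prove the bicycle--component lemma), is that it delivers the rank statement underlying Theorem \ref{thm:four_solutions} and the link criterion of Theorem \ref{thm:link_case} within a single framework, whereas the paper's proof is elementary, constructive, and self-contained.
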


\begin{proof}
We first see the case that a knot diagram is reduced.
Let $D$ be a reduced knot diagram and $c$ a crossing of $D$.
We choose an orientation of $D$ and splice $D$ at $c$.
As the result, we have a link diagram with two components.
Let $D_{1}$ and $D_{2}$ be the knot diagrams obtained from the link diagram by forgetting the other component.
We color the regions of $D_{1}$ with black and white like as a checkerboard.
Considering each region of $D_{1}$ as the corresponding regions of $D$, we apply region crossing changes on $D$ about all black regions.
Then the crossing change at $c$ is realized (see Figure \ref{fig:algorithm}).
Indeed, each crossing of $D$ persisting on $D_{1}$ touches just two black regions.
Since each crossing of $D$ persisting on $D_{2}$ lies in a region of $D_{1}$, it touches just zero or four black regions.
Further each of the other crossings of $D$ except $c$ touches just two black regions.
Thus the sequence of region crossing changes does not change those crossings in consequence.
On the other hand, the sequence changes the crossing $c$, because $c$ touches just one or three black regions.
\begin{figure}[htbp]
 \begin{center}
  \includegraphics[scale=0.18]{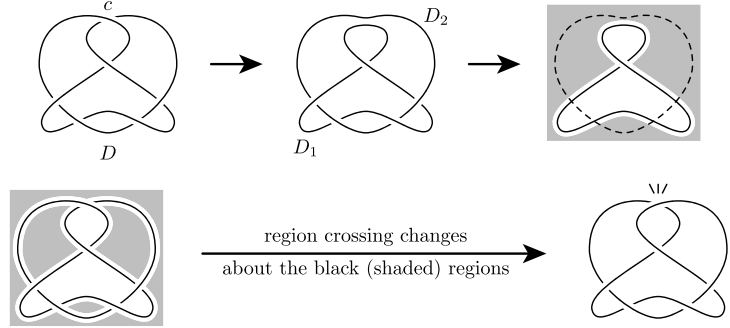}
 \end{center}
 \caption{Algorithm for a reduced knot diagram}
 \label{fig:algorithm}
\end{figure}

We next focus on the spacial case that a knot diagram $D$ has just one reducible crossing $c$; further we would like to realize a crossing change at $c$.
In this case, the above algorithm also works well if we color the regions of $D_{1}$ so that the color of the region including $D_{2}$ is white.
Then $c$ touches just one black region, and each of the other crossings does an even number of black regions.

We finally assume that the claim is true for any knot diagram with $k$ reducible crossings ($k \geq 0$).
Let $D$ be a knot diagram with $k+1$ reducible crossings and $c$ a crossing of $D$.
As long as $k \neq 0$ or $c$ is not a reducible crossing, we may choose a reducible crossing $c^{\prime}$ of $D$ differ from $c$ satisfying the following two conditions:
\begin{itemize}
\item
Let $D_{1}$ and $D_{2}$ be the knot diagrams obtained from $D$ by splicing it at $c^{\prime}$.
Then $D_{2}$ is irreducible (thus $D_{1}$ has $k$ reducible crossings).
\item
The crossing $c$ persists on $D_{1}$.
\end{itemize}
We note that $D_{2}$ lies in some region $R_{1}$ of $D_{1}$.
By the assumption, there is a set $\mathcal{R}_{1}$ consisting of regions of $D_{1}$ such that region crossing changes about $\mathcal{R}_{1}$ realize the crossing change at $c$ on $D_{1}$.
Let $\mathcal{R}$ be the set consisting of regions of $D$, each of which coincides with or lies in a region in $\mathcal{R}_{1}$.
If $c^{\prime}$ touches an even number of regions in $\mathcal{R}$, then region crossing changes about $\mathcal{R}$ realize the crossing change at $c$ on $D$.
Indeed, each crossing of $D$ persisting on $D_{2}$ touches just zero or four regions in $\mathcal{R}$.
Otherwise, if $c^{\prime}$ touches an odd number of regions in $\mathcal{R}$, we color the regions of $D_{2}$ with black and white like as a checkerboard so that the color of the region including $D_{1}$ is white.
Considering each region of $D_{2}$ as the corresponding regions of $D$, we subtract or add all black regions from or to $\mathcal{R}$ if $R_{1}$ is a member of $\mathcal{R}_{1}$ or not respectively.
Then region crossing changes about $\mathcal{R}$ realize the crossing change at $c$ on $D$.
Indeed, each crossing of $D$ persisting on $D_{2}$ touches just two regions in $\mathcal{R}$.
By the assumption for the coloring, the subtraction or addition does not change each number of regions in $\mathcal{R}$ which a crossing of $D$ persisting on $D_{1}$ touches.
Further $c^{\prime}$ touches just two regions in $\mathcal{R}$.
\end{proof}

Since any link is untied by a sequence of crossing changes, we immediately have the following corollary:

\begin{corollary}[\cite{Shimizu2014}]
\label{cor:rcc_is_unknotting_operation}
Region crossing change is an unknotting operation for knots.
\end{corollary}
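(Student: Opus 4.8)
The plan is to combine the classical fact that crossing change is an unknotting operation with Theorem \ref{thm:rcc_realizes_crossing_change}. First I would recall that, for any knot diagram $D$, there is a subset $S$ of the crossings of $D$ such that changing exactly the crossings in $S$ yields a diagram of the trivial knot. Concretely, after fixing a base point and an orientation and traversing $D$ once, one changes each crossing that is first encountered as an under-crossing into an over-crossing; the resulting descending diagram represents the unknot. Hence a finite sequence of single crossing changes transforms $D$ into a diagram of the trivial knot.

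Next I would observe that a crossing change does not alter the underlying projection of a diagram: it merely exchanges the over/under information at one crossing, leaving the shadow, and hence the collection of regions together with the incidence of crossings with regions, unchanged. Consequently Theorem \ref{thm:rcc_realizes_crossing_change} may be applied repeatedly to one and the same region structure. For each crossing $c \in S$, the theorem supplies a set $\mathcal{R}_{c}$ of regions of $D$ such that the region crossing changes about $\mathcal{R}_{c}$ have the net effect of changing $c$ alone.

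Finally I would concatenate these sequences. Carrying out, for every $c \in S$, the region crossing changes about $\mathcal{R}_{c}$ produces a single sequence of region crossing changes on $D$ whose net effect is precisely to change each crossing in $S$ once and to leave every other crossing unchanged. This is exactly the combination of crossing changes that unties $D$, so the sequence of region crossing changes transforms $D$ into a diagram of the trivial knot. Since $D$ was arbitrary, region crossing change is an unknotting operation for knots.

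The main obstacle is bookkeeping rather than a genuine difficulty: one must check that the realizing sequences furnished by Theorem \ref{thm:rcc_realizes_crossing_change} for the several crossings can be applied to the common diagram without interfering with one another. This is guaranteed because each realizing sequence has trivial net effect on every crossing except its target, and because region crossing changes at distinct regions commute, so the order in which the sequences are performed is immaterial.
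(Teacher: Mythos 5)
Your proposal is correct and follows essentially the same route as the paper, which derives the corollary immediately from the classical fact that crossing changes untie any knot together with Theorem \ref{thm:rcc_realizes_crossing_change}; you merely spell out the descending-diagram construction and the concatenation bookkeeping that the paper leaves implicit. Your observation that region crossing changes preserve the underlying projection, so the realizing sequences for different crossings can be composed without interference, is a valid justification of that implicit step.
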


Although region crossing change is not an unknotting operation for links with two or more components, Cheng ZhiYun and Gao HongZhu gave the following criteria (we omit the proof in this paper):

\begin{theorem}[\cite{ZhiYun2013, ZH2012}]
\label{thm:link_case}
A sequence of region crossing changes unties a link $L$ with two or more components if and only if the total linking number of $L$ is even.
\end{theorem}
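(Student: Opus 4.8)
The plan is to treat the two implications separately, with the parity of the total linking number $\lambda(L) = \sum_{i<j}\mathrm{lk}(K_i,K_j)$ of $L = K_1 \cup \cdots \cup K_n$ serving as the controlling invariant.

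For necessity I would show that $\lambda(L) \bmod 2$ is left unchanged by a single region crossing change; since the trivial link has $\lambda = 0$, this forces $\lambda(L)$ to be even whenever $L$ is untied. A region crossing change at a region $R$ flips precisely those crossings of $D$ incident to $R$ an odd number of times. Among these, self-crossings of a single component contribute nothing to $\lambda$, while each inter-component crossing between $K_i$ and $K_j$ alters $\mathrm{lk}(K_i,K_j)$, and hence $\lambda$, by $\pm 1$; so the change of $\lambda$ has the same parity as the number of inter-component crossings flipped. The key step is to see this number is even. Traversing the boundary circle of $R$ and recording at each corner which of the two strands of the local crossing we follow, we switch component exactly at the corners coming from inter-component crossings, and returning to the start forces the total number of such corners to be even; since a crossing is flipped precisely when it owns an odd number of corners of $R$, an even number of inter-component crossings is flipped, and $\lambda \bmod 2$ is preserved.

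For sufficiency I would work over $\mathbb{F}_2$, regarding each region crossing change as the vector recording which crossings it flips and a sequence of region crossing changes as an $\mathbb{F}_2$-linear combination of these vectors; the realizable crossing-change patterns then form a subspace $\mathcal{I}$. By the necessity argument every element of $\mathcal{I}$ flips an even number of inter-component crossings, and the content of the theorem is the converse: that $\mathcal{I}$ is \emph{exactly} the space of patterns flipping an even number of inter-component crossings. Granting this, sufficiency follows cleanly: untying $L$ amounts to realizing some crossing-change pattern $S$ carrying $D$ to a diagram of the trivial link, and when $\lambda(L)$ is even any such $S$ changes $\lambda$ by an even amount and hence flips an even number of inter-component crossings, so $S \in \mathcal{I}$.

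It remains to prove the reverse inclusion, and I expect this to be the main obstacle. My plan is to produce the required region sequences constructively in two regimes: self-crossings one at a time, by adapting the splice-and-checkerboard algorithm of Theorem \ref{thm:rcc_realizes_crossing_change} so that a chosen self-crossing is flipped while every other crossing is touched an even number of times, and inter-component crossings two at a time, by exhibiting a region sequence that simultaneously flips any prescribed pair. The even-linking-number hypothesis is exactly what lets this bookkeeping close up: marking the edge $\{i,j\}$ of the complete graph on the components whenever $\mathrm{lk}(K_i,K_j)$ is odd, evenness of $\lambda(L)$ says the marked edges are even in number, so they pair off and each pair is killed by one simultaneous flip of two inter-component crossings, after which the remaining inter-component crossings fall into pairs within single component-pairs. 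The genuine difficulty is that this requires a global, spanning-tree-type extension of Shimizu's knot argument to links rather than any purely local construction, and verifying that the single parity of $\lambda$ is the only surviving obstruction is where the care is concentrated.
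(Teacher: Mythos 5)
The paper gives no proof of this theorem at all---it explicitly defers to \cite{ZhiYun2013, ZH2012}---so there is no in-paper argument to match yours against; the question is whether your proposal stands as a complete proof on its own, and it does not. Your necessity half is correct and is essentially the standard argument: along each boundary circle of $R$ (note the boundary may be disconnected; the argument applies circle by circle) the component label switches exactly at corners coming from inter-component crossings, so $R$ owns an even number of such corners; and since an inter-component crossing cannot have two of its four local regions coincide (adjacent local regions coinciding would produce a simple closed curve meeting the diagram transversally in a single point, while opposite ones coinciding makes the crossing nugatory and hence a self-crossing), each inter-component crossing touching $R$ owns exactly one corner. Hence a region crossing change flips an even number of inter-component crossings and preserves $\lambda(L) \bmod 2$, which, together with $\lambda = 0$ for the trivial link, gives necessity. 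One caveat: under the paper's definition a crossing touching $R$ is changed once regardless of how many corners it contributes, so your rule ``flipped iff an odd number of corners'' misstates the move at nugatory self-crossings; this is harmless for the parity of $\lambda$, since only inter-component crossings matter there, but the move should be stated as the paper defines it.

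The genuine gap is the one you flag yourself: sufficiency rests entirely on the claim that the realizable subspace $\mathcal{I}$ is \emph{exactly} the space of crossing-change patterns flipping an even number of inter-component crossings, and you never prove it. That claim is not a technical verification to be deferred---it is the main content of \cite{ZhiYun2013}, i.e., of the theorem you are asked to prove. Your two-regime plan (single self-crossings via an adaptation of the splice-and-checkerboard argument of Theorem \ref{thm:rcc_realizes_crossing_change}, inter-component crossings in prescribed pairs) has the right shape, but the second regime is where all the work lies: one must exhibit, for an \emph{arbitrary} pair of inter-component crossings---possibly involving two different pairs of components, in an arbitrary and possibly reducible diagram---a region set flipping exactly those two crossings, and nothing in your sketch indicates how to construct it. Moreover, once that pair-flip lemma is in hand, your spanning-tree and marked-edge bookkeeping is unnecessary: any even-size set of inter-component crossings pairs off arbitrarily, so the combinatorial apparatus does not substitute for the missing lemma. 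As written, the proposal establishes the easy direction and reduces the hard direction to the very result being proved.
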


A set $\mathcal{R}^{\ast}$ consisting of regions of a link diagram is said to be \emph{ineffective} if region crossing changes about $\mathcal{R}^{\ast}$ do not change the diagram in consequence.

\begin{lemma}
\label{lem:existence_of_four_ineffective_region_sets}
We have at least four ineffective region sets for any knot diagram.
\end{lemma}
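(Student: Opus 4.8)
The plan is to exhibit four explicit region sets and to verify that each is ineffective and that the four are mutually distinct. Let $D$ be a knot diagram with $n$ crossings, regarded on $S^{2}$; Euler's formula then gives $n+2 \geq 2$ regions. Two ineffective sets are immediate. The empty set $\emptyset$ changes nothing by definition. The set $\mathcal{A}$ of all regions is ineffective as well: performing a region crossing change at each region toggles a crossing $c$ once for every corner of $c$ lying in that region, so the total number of toggles at $c$ equals its total number of corners, namely four, which is even; hence region crossing changes about $\mathcal{A}$ leave $D$ unchanged.

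The remaining two sets come from a checkerboard coloring. Every link diagram on $S^{2}$ is checkerboard colorable, so I would color the regions of $D$ black and white with regions meeting along an edge receiving opposite colors, and let $\mathcal{B}$ and $\mathcal{W}$ be the sets of black and white regions. The key local observation is that the four corners around any crossing alternate in color as one travels around it, since consecutive corners are separated by an edge; thus exactly two of the four corners are black and two are white, independently of whether some of these corners belong to a common region. Consequently region crossing changes about $\mathcal{B}$ toggle each crossing exactly twice, and likewise for $\mathcal{W}$, so both $\mathcal{B}$ and $\mathcal{W}$ are ineffective. (Since $\mathcal{B}$ and $\mathcal{W}$ are disjoint with union $\mathcal{A}$, one has $\mathcal{B} \oplus \mathcal{W} = \mathcal{A}$, so $\{\emptyset, \mathcal{B}, \mathcal{W}, \mathcal{A}\}$ is closed under $\oplus$, forming a Klein four-group inside the $\mathbb{F}_{2}$-space of region sets.)

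It remains to confirm that $\emptyset$, $\mathcal{B}$, $\mathcal{W}$, and $\mathcal{A}$ are four genuinely distinct sets; this is the only point that needs care, especially for diagrams with very few crossings. Both colors actually occur: if $D$ has a crossing, the four corners around it already exhibit both colors, while if $D$ has none it is a simple circle whose inside and outside regions are oppositely colored. Hence $\mathcal{B}$ and $\mathcal{W}$ are nonempty and disjoint with $\mathcal{B} \cup \mathcal{W} = \mathcal{A}$, which forces $\mathcal{B} \neq \mathcal{W}$, $\mathcal{B} \neq \emptyset \neq \mathcal{W}$, $\mathcal{B} \neq \mathcal{A} \neq \mathcal{W}$, and $\emptyset \neq \mathcal{A}$. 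Thus all four are distinct, producing at least four ineffective region sets. The main — and only mild — obstacle is this bookkeeping of degenerate cases; the ineffectiveness of each candidate reduces to the elementary corner-parity counts above.
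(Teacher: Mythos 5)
There is a genuine gap, and it sits exactly at the point you waved away with the phrase ``independently of whether some of these corners belong to a common region.'' A region crossing change at a region $R$ toggles each crossing \emph{touching} $R$ once, not once per corner of the crossing lying in $R$; this is how the paper defines the move, and it is what the mod-$2$ incidence matrix of Section~\ref{sec:facts_about_region_crossing_change} records (the $(i,j)$ entry is $1$ if $c_i$ touches $R_j$, regardless of how many corners of $c_i$ lie in $R_j$). Your corner-parity counts therefore compute the wrong quantity whenever two corners of a crossing belong to the same region, which happens precisely at reducible (nugatory) crossings: there the two \emph{opposite} corners in one color class, say white, lie in a single region, so region crossing changes about $\mathcal{W}$ toggle that crossing once, and about $\mathcal{A}$ three times (the crossing touches only three regions, not four). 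Concretely, for the one-crossing figure-eight diagram of the unknot, $\mathcal{W}$ consists solely of the outer region and a region crossing change there changes the crossing; of your four candidates only $\emptyset$ and $\mathcal{B}$ are ineffective. The lemma is still true for this diagram, but the four ineffective sets are $\emptyset$, the set of the two lobes, and the two mixed-color sets pairing one lobe with the outer region --- not color classes at all, so the defect cannot be repaired by bookkeeping of degenerate cases within your construction.

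For \emph{reduced} diagrams your argument is correct and is exactly the paper's: in a reduced diagram every crossing touches four distinct regions (adjacent corners always lie in distinct regions since a $4$-regular plane graph has no bridges, and reducedness rules out opposite corners sharing a region), so each crossing touches exactly two black and two white regions, and $\emptyset$, $\mathcal{B}$, $\mathcal{W}$, $\mathcal{A} = \mathcal{B} \oplus \mathcal{W}$ are four distinct ineffective sets. What is missing is the reducible case, which the paper handles separately: it splices $D$ at all reducible crossings into reduced pieces $D_i$, takes the checkerboard sets on one piece $D_j$, and extends them to ineffective sets for $D$ by the same mechanism as in the proof of Theorem~\ref{thm:rcc_realizes_crossing_change}, adding suitably colored regions of the other pieces so that each reducible crossing ends up touching an even number of chosen regions. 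Some such correction step (or an induction on the number of reducible crossings) is unavoidable, as the figure-eight example shows.
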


\begin{proof}
Obviously, $\mathcal{R}^{\ast}_{1} = \emptyset$ is ineffective for any link diagram.

We first let $D$ be a reduced knot diagram.
We color the regions of $D$ by black and white like as a checkerboard.
Suppose $\mathcal{R}^{\ast}_{2}$ (resp.\ $\mathcal{R}^{\ast}_{3}$) is the set consisting of all black (resp.\ white) regions.
Then $\mathcal{R}^{\ast}_{2}$, $\mathcal{R}^{\ast}_{3}$ and $\mathcal{R}^{\ast}_{4} = \mathcal{R}^{\ast}_{2} \oplus \mathcal{R}^{\ast}_{3}$ are mutually distinct, non-empty and ineffective for $D$.

We next let $D$ be a reducible knot diagram with $k$ reducible crossings.
Splicing $D$ at all reducible crossings, we obtain reduced knot diagrams $D_{i}$ ($i = 1, 2, \dots, k$).
Suppose $\mathcal{R}^{\ast}_{2}$, $\mathcal{R}^{\ast}_{3}$ and $\mathcal{R}^{\ast}_{4}$ are the above ineffective region sets for some $D_{j}$.
Then we may extend $\mathcal{R}^{\ast}_{2}$, $\mathcal{R}^{\ast}_{3}$ and $\mathcal{R}^{\ast}_{4}$ to mutually distinct, non-empty and ineffective region sets for $D$ along the same lines of the proof of Theorem \ref{thm:rcc_realizes_crossing_change}.
\end{proof}

An effect of region crossing changes can be estimated algebraically as follows.
Let $D$ be a link diagram, $c_{1}, c_{2}, \dots, c_{n}$ the crossings of $D$, and $R_{1}, R_{2}, \dots, R_{m}$ the regions of $D$.
The transpose of the \emph{incidence matrix}\footnote{The incidence matrix of $D$ was introduced by Cheng ZhiYun and Gao HongZhu \cite{ZH2012}. Although the transpose of $A$ coincides with the incidence matrix of $D$ after shuffling its rows in general, we simply call $A$ the transpose of the incidence matrix in this paper.} of $D$ is the $n \times m$ matrix $A$ over $\mathbb{Z} / 2 \mathbb{Z}$ whose $(i, j)$ entry is $1$ if $c_{i}$ touches $R_{j}$, $0$ otherwise.
For a set $\mathcal{R}$ consisting of regions of $D$, consider the $m$-dimensional vector $\vect{r}$ over $\mathbb{Z} / 2 \mathbb{Z}$ whose $i$-th entry is $1$ if $R_{i}$ in $\mathcal{R}$, $0$ otherwise.
Then region crossing changes about $\mathcal{R}$ changes $c_{i}$ in consequence if and only if the $i$-th entry of $A \vect{r}$ is 1.
We have the following theorem, which is a special case of the work of Megumi Hashizume:

\begin{theorem}[\cite{Hashizume2015}]
\label{thm:four_solutions}
If $D$ is a knot diagram, then we have just four solutions of the linear equations $A \vect{r} = \vect{c}$ over $\mathbb{Z} / 2 \mathbb{Z}$ for each $n$-dimensional vector $\vect{c}$ over $\mathbb{Z} / 2 \mathbb{Z}$.
\end{theorem}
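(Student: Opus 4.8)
The plan is to translate the statement into two linear-algebraic facts about the map $A \colon (\mathbb{Z}/2\mathbb{Z})^{m} \to (\mathbb{Z}/2\mathbb{Z})^{n}$: that $A$ is surjective, and that its kernel has exactly four elements. Once these are in hand the theorem follows formally, since for a surjective linear map the preimage of any $\vect{c}$ is a coset of $\ker A$ and hence has exactly $|\ker A|$ elements.

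For surjectivity I would read Theorem \ref{thm:rcc_realizes_crossing_change} algebraically. By the paragraph preceding the theorem, region crossing changes about $\mathcal{R}$ realize exactly the crossing change at $c_{i}$ (and change no other crossing) precisely when $A \vect{r} = \vect{e}_{i}$, where $\vect{e}_{i}$ is the $i$-th standard basis vector. Theorem \ref{thm:rcc_realizes_crossing_change} guarantees that such an $\mathcal{R}$ exists for every $i$, so each $\vect{e}_{i}$ lies in the image of $A$. As the $\vect{e}_{i}$ span $(\mathbb{Z}/2\mathbb{Z})^{n}$, the map $A$ is surjective; in particular $A \vect{r} = \vect{c}$ is solvable for every $\vect{c}$, and $\operatorname{rank} A = n$.

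To count the kernel I would invoke the rank-nullity theorem, which gives $\dim \ker A = m - \operatorname{rank} A = m - n$, and then pin down $m - n$ by Euler's formula. Assuming $D$ has at least one crossing, it is a connected $4$-valent plane graph with $V = n$ vertices; $4$-valence gives $2E = 4n$, so $E = 2n$, and its faces are exactly the $m$ regions of $D$ (counting the unbounded one). Hence $n - 2n + m = 2$, so $m = n + 2$ and $\dim \ker A = 2$, i.e.\ $|\ker A| = 4$. The crossingless case, where $D$ is a simple closed curve bounding $m = 2$ regions with $n = 0$, is checked directly. Combining surjectivity with $|\ker A| = 4$ yields exactly four solutions of $A \vect{r} = \vect{c}$ for every $\vect{c}$. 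As a sanity check, the four elements of $\ker A$ are precisely the four ineffective region sets produced in Lemma \ref{lem:existence_of_four_ineffective_region_sets}, which already forces $|\ker A| \geq 4$.

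The computation is short once assembled, so I do not expect a serious obstacle; the points needing care are the two inputs rather than any intricate estimate. Specifically, I must make sure that the algebraic reading of Theorem \ref{thm:rcc_realizes_crossing_change} really yields surjectivity (so that $\operatorname{rank} A = n$ and the system is always solvable), and that Euler's formula is applied to a genuinely connected graph with the correct edge count and with the unbounded region counted among the $m$ regions. The degenerate crossingless diagram is the only place where the general bookkeeping breaks down and must be treated as a separate base case.
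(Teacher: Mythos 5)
Your proposal is correct and follows essentially the same route as the paper's proof: rank $A = n$ extracted from Theorem \ref{thm:rcc_realizes_crossing_change}, combined with $m = n + 2$, gives a two-dimensional kernel over $\mathbb{Z}/2\mathbb{Z}$ and hence exactly four solutions of $A \vect{r} = \vect{c}$ for each $\vect{c}$. You merely spell out what the paper leaves implicit (Euler's formula for $m = n+2$, the rank-nullity coset count, and the crossingless base case), while the paper instead cites Lemma \ref{lem:existence_of_four_ineffective_region_sets} to exhibit the four kernel elements --- the same fact you note as a sanity check.
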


\begin{proof}
We first note that $m = n + 2$ in this case.
Theorem \ref{thm:rcc_realizes_crossing_change} ensures that the rank of $A$ is $n$.
Further Lemma \ref{lem:existence_of_four_ineffective_region_sets} claims that we have at least four solutions of the linear equations $A \vect{r} = \vect{o}$, where $\vect{o}$ denotes the zero vector.
Thus, reordering the indices of the regions if necessary, $A$ is transformed into a reduced row echelon form $(I_{n} \ \vect{x} \ \vect{y})$ with some non-zero vectors $\vect{x}$ and $\vect{y}$ by a sequence of elementary row operations, where $I_{n}$ denotes the $n$-dimensional identity matrix.
\end{proof}

The claim in Theorem \ref{thm:four_solutions} is able to be rephrased as follows.
If $D$ is a knot diagram, for any set $\mathcal{C}$ of crossings of $D$, there are just four sets $\mathcal{R}_{1}$, $\mathcal{R}_{2}$, $\mathcal{R}_{3}$ and $\mathcal{R}_{4}$ consisting of regions of $D$ such that region crossing changes about each $\mathcal{R}_{i}$ change all crossings in $\mathcal{C}$ but do not the other crossings in consequence.
Further let $\mathcal{R}^{\ast}_{2}$, $\mathcal{R}^{\ast}_{3}$ and $\mathcal{R}^{\ast}_{4}$ be the non-empty ineffective region sets for $D$.
Then $\mathcal{R}_{i}$ coincide with $\mathcal{R}_{1} \oplus \mathcal{R}^{\ast}_{\sigma(i)}$ with some $\sigma \in \mathfrak{S}_{3}$, where $\mathfrak{S}_{3}$ denotes the symmetric group on $\{ 2, 3, 4 \}$.

\begin{remark}
Assume that $D$ is a knot diagram.
For each point $p$ on an arc of $D$ differ from crossings, let $R$ and $R^{\prime}$ be the regions of $D$ touching $p$.
Since $D$ has exactly four ineffective region sets $\mathcal{R}^{\ast}_{1}$, $\mathcal{R}^{\ast}_{2}$, $\mathcal{R}^{\ast}_{3}$ and $\mathcal{R}^{\ast}_{4}$, as was claimed in \cite{Hashizume2015}, just two of $\{ \mathcal{R}^{\ast}_{1}, \mathcal{R}^{\ast}_{2}, \mathcal{R}^{\ast}_{3}, \mathcal{R}^{\ast}_{4}\}$ have $R$ (resp.\ $R^{\prime}$) as a member.
Further only one of them has both $R$ and $R^{\prime}$ as members.
Recall that the non-empty ineffective region sets for $D$ come from a checkerboard coloring of an arbitrary reducible part of $D$.
Suppose $(I_{n} \ \vect{x} \ \vect{y})$ is the reduced row echelon form of $A$.
We may assume that $R_{n+1} = R$, $R_{n+2} = R^{\prime}$, and $\mathcal{R}^{\ast}_{2}$ (resp.\ $\mathcal{R}^{\ast}_{3}$) has $R$ (resp.\ $R^{\prime}$) as a member but does not $R^{\prime}$ (resp.\ $R$).
Then $\vect{x}$ (resp.\ $\vect{y}$) is the vector whose $i$-th entry is 1 if $R_{i}$ is a member of $\mathcal{R}^{\ast}_{2}$ (resp.\ $\mathcal{R}^{\ast}_{3}$), $0$ otherwise.
\end{remark}

\section{Study on region freeze crossing change}
\label{sec:study_on_region_freeze_crossing_change}

In this section, we study on region freeze crossing change.
Just as region crossing change, region freeze crossing changes first at a region and then at the region again do not change a link diagram in consequence.
Effects of region freeze crossing changes first at a region $R$ and then at a region $R^{\prime}$ and first at $R^{\prime}$ and then at $R$ are the same.
We thus reword region freeze crossing changes at a region $R$, then at a region $R^{\prime}$, $\dots$, then at a region $R^{\prime \prime}$ as region freeze crossing changes about $\{ R, R^{\prime}, \dots, R^{\prime \prime} \}$ if the regions $R, R^{\prime}, \dots, R^{\prime \prime}$ are mutually distinct.

We first see a relationship between region freeze crossing change and region crossing change:

\begin{lemma}
\label{lem:relationship}
Let $D$ be a link diagram and $\mathcal{R}$ a set consisting of regions of $D$.
If the cardinality of $\mathcal{R}$ is even, then effects of region freeze crossing changes about $\mathcal{R}$ and region crossing changes about $\mathcal{R}$ are the same.
Otherwise, the diagram obtained from $D$ by applying region freeze crossing changes about $\mathcal{R}$ is the mirror image of the diagram obtained from $D$ by applying region crossing changes about $\mathcal{R}$.
\end{lemma}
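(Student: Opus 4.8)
The plan is to compare the two operations crossing by crossing, using the parity description of region crossing changes recalled just before Theorem \ref{thm:four_solutions}. Fix the crossings $c_{1}, \dots, c_{n}$ of $D$ and let $\vect{r}$ be the indicator vector of $\mathcal{R}$, so that region crossing changes about $\mathcal{R}$ change $c_{i}$ in consequence precisely when the $i$-th entry of $A \vect{r}$ is $1$, that is, when $c_{i}$ touches an odd number of regions in $\mathcal{R}$. For region freeze crossing change the defining condition is the opposite one: the change at a single region $R$ affects exactly those crossings that do \emph{not} touch $R$. Hence region freeze crossing changes about $\mathcal{R}$ change $c_{i}$ in consequence precisely when $c_{i}$ fails to touch an odd number of regions in $\mathcal{R}$, i.e.\ when $|\mathcal{R}|$ minus the number of regions in $\mathcal{R}$ touching $c_{i}$ is odd.

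First I would record the resulting parity identity. Writing $t_{i}$ for the number of regions in $\mathcal{R}$ touching $c_{i}$, region crossing changes switch $c_{i}$ iff $t_{i}$ is odd, while region freeze crossing changes switch $c_{i}$ iff $|\mathcal{R}| - t_{i}$ is odd. If $|\mathcal{R}|$ is even, then $|\mathcal{R}| - t_{i} \equiv t_{i} \pmod{2}$, so the two operations switch exactly the same set of crossings and therefore produce the same diagram; this settles the first assertion. Since both operations leave the underlying shadow of $D$ unchanged, the comparison is valid simultaneously at every crossing, so no further bookkeeping is needed in this case.

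The second assertion is the odd case, and here I would argue as follows. When $|\mathcal{R}|$ is odd we instead have $|\mathcal{R}| - t_{i} \equiv t_{i} + 1 \pmod{2}$, so region freeze crossing changes switch $c_{i}$ iff region crossing changes do \emph{not}. Thus the set of crossings switched by the region freeze crossing changes is the complement of the set switched by the region crossing changes. The key step is then to identify this complementation with passing to the mirror image: the mirror image of a diagram, taken with the same shadow, is by definition obtained by reversing all of its crossings. Consequently, if we start from $D$, apply region crossing changes about $\mathcal{R}$, and then reverse every crossing, the crossings switched relative to $D$ form exactly the complement of those switched by the region crossing changes, which is precisely the set switched by the region freeze crossing changes. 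Equivalently, over $\mathbb{Z} / 2 \mathbb{Z}$ the effect vector of the region freeze crossing changes equals $A \vect{r} + \vect{1}$, where $\vect{1}$ is the all-ones vector recording the mirror operation, whereas that of the region crossing changes is $A \vect{r}$.

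I expect no serious obstacle; once both operations are phrased on the level of individual crossings the argument is just parity bookkeeping. The hard part will only be to fix the meaning of \emph{mirror image} as the diagram with the same shadow and all crossings reversed, so that ``switching the complementary set of crossings'' and ``applying the region crossing change and then reversing all crossings'' are matched to the same diagram. With that convention in place the two cases of the statement follow directly from the single parity identity $|\mathcal{R}| - t_{i} \equiv t_{i} + |\mathcal{R}| \pmod{2}$.
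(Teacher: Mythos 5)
Your argument is correct, but it runs along a different track from the paper's. The paper proves the lemma in two lines at the level of whole-diagram operations: by definition, a single region freeze crossing change at a region $R$ yields the mirror image of the result of the region crossing change at $R$ (the freeze move is the region crossing change followed by switching every crossing of the diagram), and since taking the mirror image twice is the identity, the mirrors accumulated over the $|\mathcal{R}|$ moves cancel in pairs, leaving one residual mirror exactly when $|\mathcal{R}|$ is odd. You instead work crossing by crossing: you count that $c_{i}$ is switched $t_{i}$ times under region crossing changes about $\mathcal{R}$ and $|\mathcal{R}| - t_{i}$ times under region freeze crossing changes, reduce both cases to the single congruence $|\mathcal{R}| - t_{i} \equiv t_{i} + |\mathcal{R}| \pmod{2}$, and identify switching the complementary set of crossings with mirroring only at the very end. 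The content is the same --- over $\mathbb{Z}/2\mathbb{Z}$ the freeze effect vector is $A \vect{r} + |\mathcal{R}| \, \vect{1}$ --- but your version makes that vector explicit, which is essentially the bookkeeping that the proofs of Theorems \ref{thm:main1} and \ref{thm:main2} later rely on, whereas the paper's version is shorter, purely structural, and dispenses with the incidence-matrix formalism entirely (it never needs the parity description recalled before Theorem \ref{thm:four_solutions}, which the paper states only for region crossing changes). Both treatments silently use that the effects of the moves are order-independent, which the paper establishes at the start of Section \ref{sec:study_on_region_freeze_crossing_change}, so no gap arises on either side; your one genuine point of care --- fixing the convention that the mirror image is the diagram with the same shadow and all crossings reversed --- is exactly the convention the paper's proof also presupposes.
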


\begin{proof}
By definition, the diagram obtained from $D$ by applying a region freeze crossing change at a region $R$ of $D$ is the mirror image of the diagram obtained from $D$ by applying the region crossing change at $R$.
Since taking a mirror image twice does not change a link diagram in consequence, we have the claim.
\end{proof}

Lemma \ref{lem:relationship} immediately gives us the following corollary:

\begin{corollary}
\label{cor:rfcc_is_unknotting_operation}
Region freeze crossing change is an unknotting operation for knots.
\end{corollary}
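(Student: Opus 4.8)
The plan is to deduce the statement directly from Lemma \ref{lem:relationship} together with Corollary \ref{cor:rcc_is_unknotting_operation}. First I would take an arbitrary knot diagram $D$. Since region crossing change is an unknotting operation for knots, Corollary \ref{cor:rcc_is_unknotting_operation} supplies a set $\mathcal{R}$ consisting of regions of $D$ such that region crossing changes about $\mathcal{R}$ transform $D$ into a diagram $D^{\prime}$ of the trivial knot.

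Next I would apply region freeze crossing changes about the very same set $\mathcal{R}$ to $D$. By Lemma \ref{lem:relationship}, the resulting diagram is either $D^{\prime}$ itself, when the cardinality of $\mathcal{R}$ is even, or the mirror image of $D^{\prime}$, when the cardinality of $\mathcal{R}$ is odd. The key observation that closes the argument is that the mirror image of the trivial knot is again the trivial knot. Hence in either case the diagram obtained from $D$ by region freeze crossing changes about $\mathcal{R}$ represents the trivial knot. Since $D$ was arbitrary, region freeze crossing change unties every knot, and therefore it is an unknotting operation for knots.

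I do not anticipate a genuine obstacle here, as the substantive content is entirely carried by Lemma \ref{lem:relationship}. The only point that demands a moment's care is the amphichirality of the trivial knot: it is precisely this fact that lets the odd-cardinality case, which produces a mirror image rather than an identical diagram, yield the trivial knot just as the even-cardinality case does. Were one working toward untying a fixed knot type that is \emph{not} amphichiral, this subtlety would matter; but since the target here is always the trivial knot, the parity of $|\mathcal{R}|$ is immaterial to the conclusion.
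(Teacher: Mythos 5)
Your proposal is correct and matches the paper's own proof essentially verbatim: both invoke Corollary \ref{cor:rcc_is_unknotting_operation} to obtain a set $\mathcal{R}$ of regions untying $D$ via region crossing changes, then apply Lemma \ref{lem:relationship} to the same $\mathcal{R}$ and observe that the resulting diagram, whether $D^{\prime}$ or its mirror image, represents the unknot. Your explicit remark on the amphichirality of the trivial knot is a nice articulation of the point the paper dispatches with ``of course.''
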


\begin{proof}
In light of Corollary \ref{cor:rcc_is_unknotting_operation}, a knot diagram $D$ is transformed into a diagram $D^{\prime}$ of the unknot by region crossing changes about a certain set $\mathcal{R}$ consisting of regions of $D$.
Lemma \ref{lem:relationship} says that we obtain $D^{\prime}$ or its mirror image after applying region freeze crossing changes on $D$ about $\mathcal{R}$.
Both of those are of course diagrams of the unknot.
\end{proof}

Similarly, in light of Theorem \ref{thm:link_case}, we have the following corollary:

\begin{corollary}
A link $L$ with two or more components is untied by a sequence of region freeze crossing changes if and only if the total linking number of $L$ is even.
\end{corollary}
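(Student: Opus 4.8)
The plan is to follow exactly the strategy used to prove Corollary \ref{cor:rfcc_is_unknotting_operation}, transferring the already-known statement for region crossing change (Theorem \ref{thm:link_case}) across to region freeze crossing change by means of Lemma \ref{lem:relationship}. The only extra ingredient needed is the elementary observation that the trivial link is amphichiral: the mirror image of a diagram of the trivial link is again a diagram of the trivial link.

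For the \emph{if} direction, I would start from a diagram $D$ of $L$ whose total linking number is even. By Theorem \ref{thm:link_case}, there is a set $\mathcal{R}$ consisting of regions of $D$ such that region crossing changes about $\mathcal{R}$ transform $D$ into a diagram $D^{\prime}$ of the trivial link. Lemma \ref{lem:relationship} then tells me that region freeze crossing changes about $\mathcal{R}$ produce either $D^{\prime}$ itself (when the cardinality of $\mathcal{R}$ is even) or its mirror image (when the cardinality of $\mathcal{R}$ is odd). In both cases the resulting diagram represents the trivial link, so $L$ is untied by a sequence of region freeze crossing changes.

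For the \emph{only if} direction, I would run the same argument in reverse. Suppose a diagram $D$ of $L$ is transformed into a diagram $D^{\prime}$ of the trivial link by region freeze crossing changes about some set $\mathcal{R}$. Since taking a mirror image twice does not change a link diagram in consequence, Lemma \ref{lem:relationship} equally asserts that region crossing changes about $\mathcal{R}$ yield $D^{\prime}$ or its mirror image, each of which is a diagram of the trivial link. Hence region crossing changes untie $L$, and Theorem \ref{thm:link_case} forces the total linking number of $L$ to be even.

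I do not expect any genuine obstacle here; the content is a formal corollary of Lemma \ref{lem:relationship} together with Theorem \ref{thm:link_case}. The only point demanding a moment's care is the appearance of mirror images in the odd-cardinality case, which is harmless precisely because the trivial link coincides with its own mirror image, so that the property of being a diagram of the trivial link is insensitive to the mirroring introduced by Lemma \ref{lem:relationship}.
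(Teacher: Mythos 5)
Your proposal is correct and matches the paper's intended argument exactly: the paper omits an explicit proof, saying only ``Similarly, in light of Theorem \ref{thm:link_case},'' meaning precisely the transfer via Lemma \ref{lem:relationship} that you carry out, including the observation that the trivial link is its own mirror image. Your explicit treatment of the \emph{only if} direction (running Lemma \ref{lem:relationship} in reverse) is a welcome elaboration of what the paper leaves implicit, but it is the same route.
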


Lemma \ref{lem:relationship} further gives us the following theorem:

\begin{theorem}
\label{thm:main1}
Let $D$ be a knot diagram and $\mathcal{R}^{\ast}_{1}$, $\mathcal{R}^{\ast}_{2}$, $\mathcal{R}^{\ast}_{3}$ and $\mathcal{R}^{\ast}_{4}$ the ineffective region sets for $D$.
If the cardinality of $\mathcal{R}^{\ast}_{j}$ is odd for some $j$, then a crossing change at a crossing of $D$ is always realized by a sequence of region freeze crossing changes.
\end{theorem}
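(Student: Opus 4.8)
The plan is to combine the structural description of the region sets that realize a given crossing change (the rephrasing of Theorem \ref{thm:four_solutions}) with the mirror-image dictionary supplied by Lemma \ref{lem:relationship}. Fix an arbitrary crossing $c$ of $D$ and apply that rephrasing to the singleton set of crossings $\mathcal{C} = \{ c \}$. It tells us that there are exactly four sets of regions realizing the crossing change at $c$ by region crossing changes, and that, for a fixed solution $\mathcal{R}_{1}$, these four sets are precisely $\mathcal{R}_{1}$, $\mathcal{R}_{1} \oplus \mathcal{R}^{\ast}_{2}$, $\mathcal{R}_{1} \oplus \mathcal{R}^{\ast}_{3}$ and $\mathcal{R}_{1} \oplus \mathcal{R}^{\ast}_{4}$, where one of the four ineffective sets is the empty set (so that $\mathcal{R}_{1} \oplus \emptyset = \mathcal{R}_{1}$ is itself a solution). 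The goal is to exhibit one of these four sets whose cardinality is even: for such a set, the first case of Lemma \ref{lem:relationship} guarantees that region freeze crossing changes about it produce the very same diagram as region crossing changes about it, namely $D$ with only the crossing at $c$ flipped.

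Carrying this out is a parity bookkeeping. Since $X \oplus X^{\prime}$ has cardinality $|X| + |X^{\prime}| - 2 |X \cap X^{\prime}|$, we have $|\mathcal{R}_{1} \oplus \mathcal{R}^{\ast}_{i}| \equiv |\mathcal{R}_{1}| + |\mathcal{R}^{\ast}_{i}| \pmod{2}$. Hence the parities of the four candidate sets are $|\mathcal{R}_{1}|$, $|\mathcal{R}_{1}| + |\mathcal{R}^{\ast}_{2}|$, $|\mathcal{R}_{1}| + |\mathcal{R}^{\ast}_{3}|$ and $|\mathcal{R}_{1}| + |\mathcal{R}^{\ast}_{4}|$ modulo $2$. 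By hypothesis some ineffective set $\mathcal{R}^{\ast}_{j}$ has odd cardinality; note $\mathcal{R}^{\ast}_{j}$ is necessarily the nonempty ineffective set, since the empty set has even cardinality. Now split on the parity of $|\mathcal{R}_{1}|$: if $|\mathcal{R}_{1}|$ is even, then $\mathcal{R}_{1}$ already has even cardinality; if $|\mathcal{R}_{1}|$ is odd, then $|\mathcal{R}_{1}| + |\mathcal{R}^{\ast}_{j}|$ is even, so $\mathcal{R}_{1} \oplus \mathcal{R}^{\ast}_{j}$ has even cardinality. In either case one of the four solution sets, call it $\mathcal{R}$, has even cardinality.

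To finish, observe that region crossing changes about $\mathcal{R}$ realize the crossing change at $c$ by the choice of $\mathcal{R}$, while Lemma \ref{lem:relationship} ensures that region freeze crossing changes about the even-cardinality set $\mathcal{R}$ have the identical effect; hence region freeze crossing changes about $\mathcal{R}$ realize the crossing change at $c$. Since $c$ was arbitrary, the claim follows. I expect no genuine analytic difficulty here: the only points demanding care are the faithful use of the structural fact that every solution differs from a fixed one by an ineffective region set (so that the four candidates really are the $\mathcal{R}_{1} \oplus \mathcal{R}^{\ast}_{i}$) and the elementary identity for $|X \oplus X^{\prime}| \bmod 2$. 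The one conceptual hinge—arranging that the unwanted mirror image, which Lemma \ref{lem:relationship} produces exactly for odd-cardinality sets, never appears—is dispatched entirely by the assumption that some ineffective set has odd cardinality, which lets us toggle the parity of $\mathcal{R}_{1}$ without altering the realized effect.
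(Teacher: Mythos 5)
Your proof is correct and follows essentially the same route as the paper: both arguments take a region set $\mathcal{R}$ realizing the crossing change by region crossing changes, and if $|\mathcal{R}|$ is odd, replace it by $\mathcal{R} \oplus \mathcal{R}^{\ast}_{j}$ (even cardinality, same effect since $\mathcal{R}^{\ast}_{j}$ is ineffective), then conclude via Lemma \ref{lem:relationship}. The only cosmetic difference is that you invoke the four-solution structure from Theorem \ref{thm:four_solutions}, where the paper needs only the existence of one solution (Theorem \ref{thm:rcc_realizes_crossing_change}) plus ineffectiveness of $\mathcal{R}^{\ast}_{j}$.
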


\begin{proof}
For each crossing $c$ of $D$, in light of Theorem \ref{thm:rcc_realizes_crossing_change}, there is a set $\mathcal{R}$ consisting of regions of $D$ such that region crossing changes about $\mathcal{R}$ realize the crossing change at $c$.
If the cardinality of $\mathcal{R}$ is even, then region freeze crossing changes about $\mathcal{R}$ realize the crossing change at $c$.
Otherwise, since the cardinality of $\mathcal{R} \oplus \mathcal{R}^{\ast}_{j}$ is even, region freeze crossing changes about $\mathcal{R} \oplus \mathcal{R}^{\ast}_{j}$ realize the crossing change at $c$.
Recall that region crossing changes about $\mathcal{R} \oplus \mathcal{R}^{\ast}_{j}$ also realize the crossing change at $c$.
\end{proof}

Of course, even though all cardinalities of the ineffective region sets for a knot diagram are even, we possibly have a set consisting of an even number of regions of the knot diagram about which region crossing changes realize a crossing change.

\begin{theorem}
\label{thm:main2}
Let $D$ be a knot diagram.
Assume that all cardinalities of the ineffective region sets for $D$ are even.
Then a crossing change at a crossing of $D$ is always realized by a sequence of region freeze crossing changes if and only if the number of crossings of $D$ satisfying the following condition is even:
\begin{itemize}
\item[($\star$)]
The crossing change at the crossing is realized by region crossing changes about a set consisting of an odd number of regions
{\upshape (}then all cardinalities of the four region sets, about each of which region crossing changes realize the crossing change at the crossing, are odd by the assumption{\upshape )}.
\end{itemize}
\end{theorem}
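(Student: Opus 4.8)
The plan is to recast the whole problem in the linear algebra over $\mathbb{Z}/2\mathbb{Z}$ set up before Theorem \ref{thm:four_solutions}, and to isolate a single $\mathbb{Z}/2\mathbb{Z}$-linear ``parity functional'' that controls everything. Write $\vect{1}$ for the all-ones vector in $(\mathbb{Z}/2\mathbb{Z})^{n}$ and, for a crossing $c_{k}$, write $\vect{e}_{k}$ for the vector whose only nonzero entry sits in the $k$-th slot, so that the crossing change at $c_{k}$ corresponds to the target $\vect{e}_{k}$. For any target $\vect{c}$, Theorem \ref{thm:four_solutions} gives exactly four solutions $\vect{r}$ of $A\vect{r}=\vect{c}$, and any two of them differ by an ineffective region vector; since every ineffective region set is assumed to have even cardinality, all four solutions have the same parity of cardinality. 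I would therefore define $p(\vect{c})\in\mathbb{Z}/2\mathbb{Z}$ to be this common parity (the cardinality mod $2$ of any $\mathcal{R}$ with $A\vect{r}=\vect{c}$). By construction, $c_{k}$ satisfies condition ($\star$) if and only if $p(\vect{e}_{k})=1$.

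Next I would translate ``realized by a sequence of region freeze crossing changes'' into a statement about $p$. A sequence of region freeze crossing changes amounts to region freeze crossing changes about some set $\mathcal{R}$, and by Lemma \ref{lem:relationship} its effect is that of region crossing changes about $\mathcal{R}$ when $|\mathcal{R}|$ is even, and the mirror image of that effect when $|\mathcal{R}|$ is odd. In the even case, realizing the crossing change at $c_{k}$ means that region crossing changes about $\mathcal{R}$ change exactly $c_{k}$, that is $A\vect{r}=\vect{e}_{k}$ with $|\mathcal{R}|$ even, which is possible precisely when $p(\vect{e}_{k})=0$. In the odd case, passing to the mirror image flips every crossing once more, so to end with only $c_{k}$ changed the underlying region crossing changes must change every crossing except $c_{k}$; that is $A\vect{r}=\vect{1}+\vect{e}_{k}$ with $|\mathcal{R}|$ odd, which is possible precisely when $p(\vect{1}+\vect{e}_{k})=1$. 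Hence the crossing change at $c_{k}$ is realizable by region freeze crossing changes if and only if $p(\vect{e}_{k})=0$ or $p(\vect{1}+\vect{e}_{k})=1$.

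Finally I would exploit that $p$ is linear: if $A\vect{r}=\vect{c}$ and $A\vect{r}^{\prime}=\vect{c}^{\prime}$ then $A(\vect{r}+\vect{r}^{\prime})=\vect{c}+\vect{c}^{\prime}$ and cardinalities add mod $2$, so $p(\vect{c}+\vect{c}^{\prime})=p(\vect{c})+p(\vect{c}^{\prime})$. Writing $b=p(\vect{1})$, the criterion of the previous paragraph reads ``$p(\vect{e}_{k})=0$ or $p(\vect{e}_{k})=1+b$'', which holds for every $k$ exactly when $b=0$, and fails for some $k$ (namely any $c_{k}$ with $p(\vect{e}_{k})=1$) when $b=1$. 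It then remains to identify $b$: since $\vect{1}=\sum_{k=1}^{n}\vect{e}_{k}$, linearity gives $b=p(\vect{1})=\sum_{k=1}^{n}p(\vect{e}_{k})$, whose value mod $2$ is exactly the number of crossings with $p(\vect{e}_{k})=1$, i.e. the number of crossings satisfying ($\star$). Thus every crossing change is realizable by region freeze crossing changes if and only if this number is even, which is the assertion.

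The step I expect to demand the most care is the odd-cardinality case in the second paragraph: one must correctly account for the global mirror image (rather than a local change at $c_{k}$) in order to see that the relevant target is $\vect{1}+\vect{e}_{k}$, and one must verify that $p$ is genuinely well defined. The latter is exactly where the hypothesis that all ineffective region sets have even cardinality enters, since without it the four solutions of $A\vect{r}=\vect{c}$ could have mixed parities and $p$ would be meaningless; this is the hinge on which the whole argument turns.
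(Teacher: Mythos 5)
Your proof is correct and is essentially the paper's argument recast in linear-algebraic language: your parity functional $p$ is exactly the cardinality-parity bookkeeping the paper performs (its well-definedness is the parenthetical remark in ($\star$), the linearity of $p$ applied to $\vect{1}+\vect{e}_{k}=\sum_{j\neq k}\vect{e}_{j}$ is precisely the paper's symmetric-difference construction $\mathcal{R}=\mathcal{R}_{1}\oplus\mathcal{R}_{2}\oplus\dots\oplus\mathcal{R}_{n-1}$, and the even/odd dichotomy via Lemma \ref{lem:relationship} is used identically in both directions). The only difference is organizational --- you derive a uniform per-crossing realizability criterion and evaluate $b=p(\vect{1})$ once, where the paper argues sufficiency and necessity separately --- so the mathematical content matches the paper's proof step for step.
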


\begin{proof}
Obviously, a crossing change at a crossing of $D$ which does not satisfy ($\star$) is realized by some sequence of region freeze crossing changes.
We thus see that the crossing change at a crossing $c$ of $D$ satisfying ($\star$) is realized by a sequence of region freeze crossing changes, if the number of crossings satisfying ($\star$) is even.
Let $\mathcal{R}_{1}, \mathcal{R}_{2}, \dots, \mathcal{R}_{n-1}$ be sets consisting of regions of $D$ about which region crossing changes realize crossing changes at the crossings of $D$ other than $c$ respectively.
Then region crossing changes about $\mathcal{R} = \mathcal{R}_{1} \oplus \mathcal{R}_{2} \oplus \dots \oplus \mathcal{R}_{n-1}$ change all crossings of $D$ other than $c$ in consequence.
Since the cardinality of $\mathcal{R}$ is odd by the assumption, region freeze crossing changes about $\mathcal{R}$ change only the crossing $c$ in consequence.

On the other hand, assume that there is a set $\mathcal{R}$ consisting of regions of $D$ about which region freeze crossing changes realize a crossing change at a crossing $c$ of $D$ satisfying ($\star$).
Then region crossing changes about $\mathcal{R}$ change all crossings of $D$ other than $c$ in consequence, because the cardinality of $\mathcal{R}$ should not be even.
Thus the number of crossings of $D$ other than $c$ satisfying ($\star$) must be odd.
\end{proof}

We wrap up our study with an example of a knot diagram having a crossing at which any sequence of region freeze crossing changes does not realize the crossing change.
Let $D$ be the knot diagram illustrated in the left-hand side of Figure \ref{fig:example1}.
Figure \ref{fig:example1} also depicts the ineffective region sets $\mathcal{R}^{\ast}_{1}$, $\mathcal{R}^{\ast}_{2}$, $\mathcal{R}^{\ast}_{3}$ and $\mathcal{R}^{\ast}_{4}$ for $D$ as shaded regions respectively.
We note that each cardinality of $\mathcal{R}^{\ast}_{i}$ is even.
It is routine to check that region crossing changes about shaded regions in Figure \ref{fig:example2} realize crossing changes at crossings $c_{i}$ of $D$ respectively.
We note that the crossings $c_{2}$, $c_{4}$ and $c_{8}$ satisfy the condition ($\star$); and the other crossings of $D$ do not.
Therefore, in light of Theorem \ref{thm:main2}, the crossing change at $c_{2}$, $c_{4}$ or $c_{8}$ is not realized by any sequence of region freeze crossing changes on $D$.
\begin{figure}[htbp]
 \begin{center}
  \includegraphics[scale=0.18]{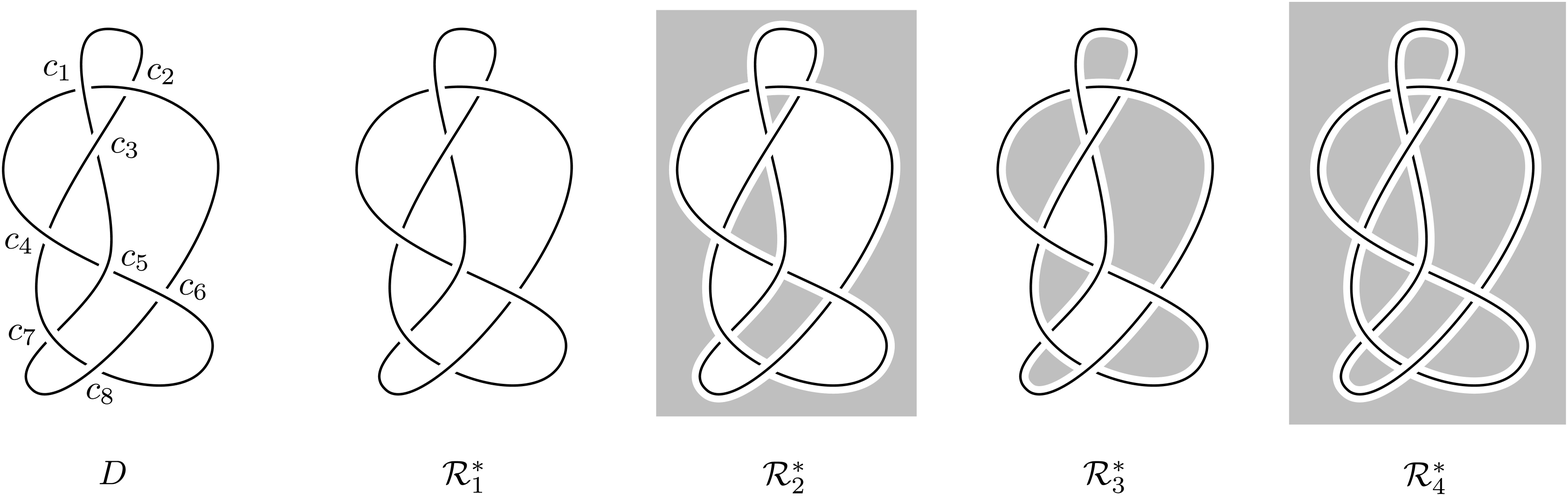}
 \end{center}
 \caption{A diagram of the $8_{13}$ knot and its four ineffective region sets}
 \label{fig:example1}
\end{figure}
\begin{figure}[htbp]
 \begin{center}
  \includegraphics[scale=0.18]{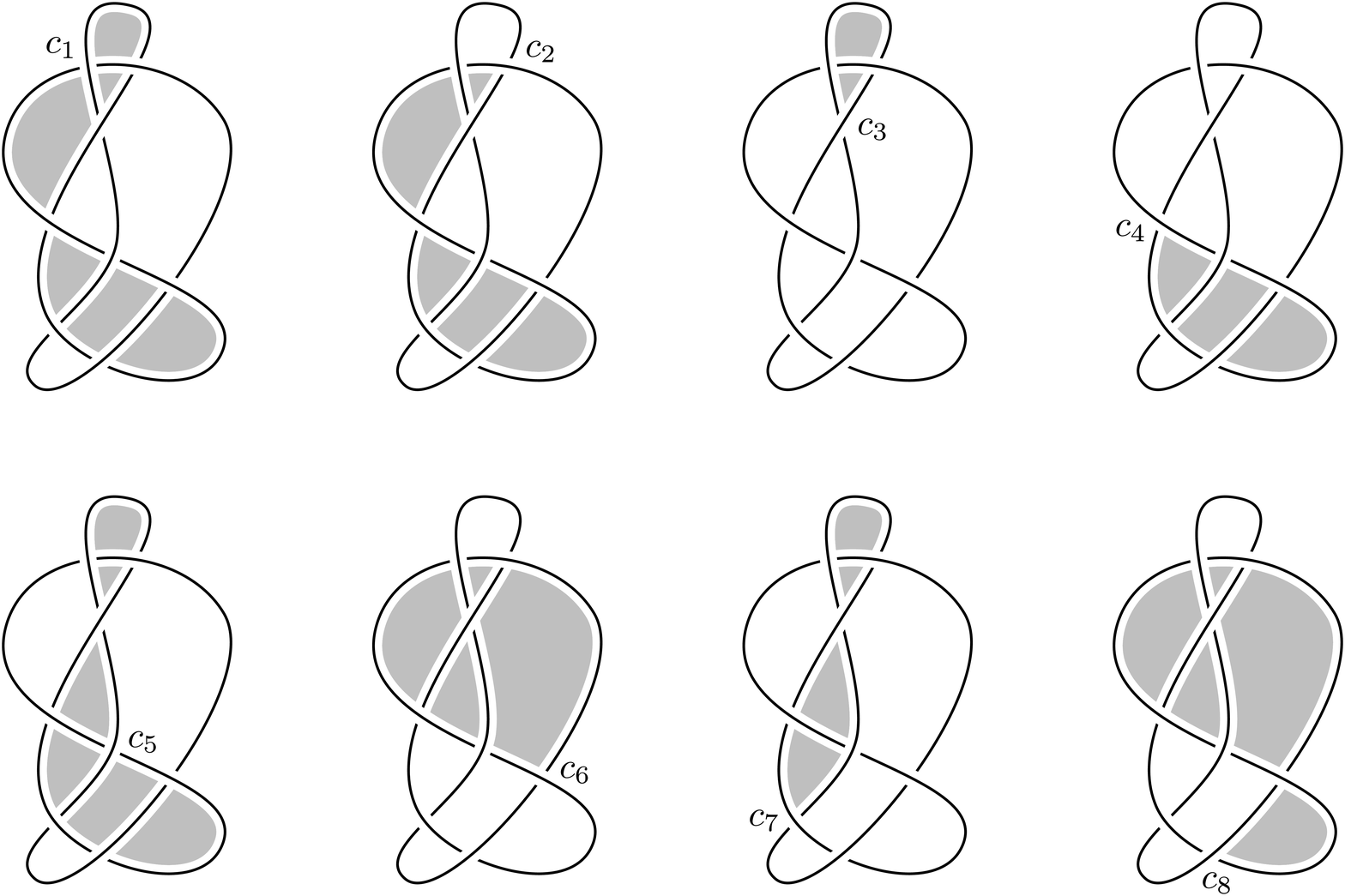}
 \end{center}
 \caption{Region crossing changes about shaded regions realize crossing changes at $c_{i}$ respectively}
 \label{fig:example2}
\end{figure}

\section*{Acknowledgments}
The notion of region freeze crossing change was originally proposed by Masato Kida.
The authors would like to express their sincere gratitude to him.
The first author is partially supported by JSPS KAKENHI Grant Number 16K17591.

\bibliographystyle{amsplain}

\end{document}